\newtheorem{theorem}{Theorem}[section]
\theoremstyle{definition}
\newtheorem{example}[theorem]{Example}
\theoremstyle{remark}
\numberwithin{equation}{section}
\begin{document}
\onehalfspacing
%
\title[Mittag-Leffler function]{Double and single integrals of the Mittag-Leffler Function: Derivation and Evaluation}


\author{Robert Reynolds}
\address[Robert Reynolds]{Department of Mathematics and Statistics, York University, Toronto, ON, Canada, M3J1P3}
\email[Corresponding author]{milver@my.yorku.ca}
\thanks{}

\subjclass[2020]{Primary 30E20, 33-01, 33-03, 33-04}
\keywords{Mittag-Leffler function, double integral, Cauchy integral, hypergeometric function}

\date{}

\dedicatory{}

\begin{abstract}
One-dimensional and two-dimensional integrals containing $E_b(-u)$ and $E_{\alpha ,\beta }\left(\delta  x^{\gamma }\right)$ are considered. $E_b(-u)$ is the Mittag-Leffler function and the integral is taken over the rectangle $0 \leq x < \infty, 0 \leq u < \infty$ and $E_{\alpha ,\beta }\left(\delta  x^{\gamma }\right)$ is the generalized Mittag-Leffler function and the integral is over $0\leq x \leq b$ with infinite intervals explored. A representation in terms of the Hurwitz-Lerch zeta function and other special functions are derived for the double and single integrals, from which special cases can be evaluated in terms of special function and fundamental constants.
\end{abstract}

\maketitle
\section{Significance Statement}
The double and single definite integral of the Mittag-Leffler function expressed in terms of the Hurwitz-Lerch zeta and other special functions is the subject of this paper. Evaluation of these integrals are required in many practical areas of science and engineering, and the purpose of this concise compendium is to give the researcher the fundamental knowledge in this area. There exists a considerable body of literature on the subject of Mittag-Leffler function.

\section{Introduction}
The Mittag-Leffler function is named after Magnus Gustaf (G\"{o}sta) Mittag-Leffler (1846-1927) a Swedish mathematician. He is accredited for his mathematical contributions in the theory of functions, which is known as complex analysis. Related topics and applications of the Mittag-Leffler function are detailed in the book by Gorenflo et al. \cite{Gorenflo}. Definite integrals whose kernels contain the Mittag-Leffler function are found in the book of Gorenflo et al. \cite{Gorenflo}, work by Gubara et al. \cite{Gubara}, Haubold \cite{Haubold}, and \"{O}zarslan \cite{ozarslan} to name a few. 
The generalised hypergeometric series or generalised hypergeometric functions are a specific example of transcendents that includes the Mittag-Leffler function. Moreover, many integrals involving the Mittag-Leffler function are either bypergeometric functions themselves or may be defined in terms of them. 
A valuable feature of this paper is the treatment of composite functions of the Hurwitz-Lerch Zeta function. With these results in hand, many useful representations of Hurwitz-Lerch zeta functions and their integrals simultaneously follow parameter specialization. In addition, these results are useful for determining the properties of  Hurwitz-Lerch zeta functions and their integral representations. A short table containing the Mittag-Leffler function is also available. This table adds a lot of utility because many mathematical functions can be expressed as a series of Hurwitz-Lerch Zeta functions.
In this current work the authors expand upon current single integrals and investigate an invariant property with respect to the index $b$ of the double integral involving the product of logarithmic and Mittag-Leffler functions. This invariant property is new to the best of our knowledge particularly in the context of double integrals. A closed form solution in terms of the Hurwtiz-Lerch zeta function is derived to represent this double integral allowing for special cases to be derived in terms of other special functions. In this paper we derive the double definite integral given by: 
\begin{multline}
\int_{0}^{\infty}\int_{0}^{\infty}e^{-c x} u^{m-1} x^{-b m} E_b(-u) \log ^k\left(a u x^{-b}\right)dudx
\end{multline}
where the parameters $k,a,c$ and $m$ are general complex numbers and $Re(m)\leq 1/2, Re(b)>0, Re(c)>0$. This definite integral will be used to derive special cases in terms of special functions and fundamental constants. The derivations follow the method used by us in~\cite{reyn4}. This method involves using a form of the generalized Cauchy's integral formula given by:
\begin{equation}\label{intro:cauchy}
\frac{y^k}{\Gamma(k+1)}=\frac{1}{2\pi i}\int_{C}\frac{e^{wy}}{w^{k+1}}dw.
\end{equation}
where $C$ is, in general, an open contour in the complex plane where the bilinear concomitant has the same value at the end points of the contour. We then multiply both sides by a function of $x$ and $u$, then take a definite double integral of both sides. This yields a definite integral in terms of a contour integral. Then, we multiply both sides of Equation~(\ref{intro:cauchy})  by another function of $x$ and $u$ and take the infinite sum of both sides such that the contour integral of both equations are the same.
\section{Definite Integral of the Contour Integral}
We used the method in~\cite{reyn4}. The variable of integration in the contour integral is $r =  w+m$. The cut and contour are in the first quadrant of the complex $r$-plane.  The cut approaches the origin from the interior of the first quadrant; the contour goes round the origin with zero radius and is on opposite sides of the cut.  Using a generalization of Cauchy's integral formula, we form the double integral by replacing $y$ with 
\begin{equation*}
\log \left(a u x^{-b}\right)
\end{equation*}
and multiplying by 
\begin{equation*}
e^{-c x} u^{m-1} x^{-b m} E_b(-u)
\end{equation*}
then taking the definite integral with respect to $x\in [0,\infty)$ and $u\in [0,\infty)$ to obtain
\begin{multline}\label{dici}
\frac{1}{\Gamma(k+1)}\int_{0}^{\infty}\int_{0}^{\infty}e^{-c x} u^{m-1} x^{-b m} E_b(-u) \log ^k\left(a u x^{-b}\right)dudx\\
=\frac{1}{2\pi i}\int_{0}^{\infty}\int_{0}^{\infty}\int_{C}a^w e^{-c x} w^{-k-1} E_b(-u) u^{m+w-1} x^{-b (m+w)}dwdudx\\
=\frac{1}{2\pi i}\int_{C}\int_{0}^{\infty}\int_{0}^{\infty}a^w e^{-c x} w^{-k-1} E_b(-u) u^{m+w-1} x^{-b (m+w)}dudxdw\\
=\frac{1}{2\pi i}\int_{C}\pi  a^w w^{-k-1} \csc (\pi  (m+w)) c^{b (m+w)-1}dw
\end{multline}
from equation (19) in \cite{mainardi} and equation (3.326.2) in \cite{grad} where $0<Re(\pi  (m+w))<1, Re(b)>0$ and using the reflection formula (8.334.3) in \cite{grad} for the Gamma function. We are able to switch the order of integration over $x$, $u$ and $r$ using Fubini's theorem for multiple integrals see page 178 in \cite{gelca}, as the integrand is of bounded measure over the space $\mathbb{C} \times [0,\infty) \times [0,\infty) $.
\section{The Hurwitz--Lerch Zeta Function and Infinite Sum of the Contour Integral}

In this section we use Equation~(\ref{intro:cauchy}) to derive the contour integral representations for the Hurwitz--Lerch Zeta  function.

\subsection{The Hurwitz--Lerch Zeta Function}

The Hurwitz--Lerch Zeta function (25.14) in \cite{dlmf} has a series representation given~by
\vspace{6pt}
\begin{equation}\label{lerch:eq}
\Phi(z,s,v)=\sum_{n=0}^{\infty}(v+n)^{-s}z^{n}
\end{equation}
where $|z|<1, v \neq 0,-1,..$ and is continued analytically by its integral representation given~by
\begin{equation}\label{armenia:eq8}
\Phi(z,s,v)=\frac{1}{\Gamma(s)}\int_{0}^{\infty}\frac{t^{s-1}e^{-vt}}{1-ze^{-t}}dt=\frac{1}{\Gamma(s)}\int_{0}^{\infty}\frac{t^{s-1}e^{-(v-1)t}}{e^{t}-z}dt
\end{equation}
where $Re(v)>0$, and either $|z| \leq 1, z \neq 1, Re(s)>0$, or $z=1, Re(s)>1$. 
\subsection{Infinite Sum of the Contour Integral}
Using Equation (\ref{intro:cauchy}) and replacing $y$ with 
\begin{equation*}
\log (a)+b \log (c)+i \pi  (2 y+1)
\end{equation*}
then multiplying both sides by 
\begin{equation*}
-2 i \pi  e^{i \pi  m (2 y+1)} c^{b m-1}
\end{equation*}
then taking the infinite sum over $y\in[0,\infty)$, and simplifying in terms of the Hurwitz--Lerch Zeta function we obtain
\begin{dmath}\label{isci}
-\frac{(2 i \pi )^{k+1} e^{i \pi  m} c^{b m-1} \Phi \left(e^{2 i m
   \pi },-k,-\frac{i (\log (a)+b \log (c)+i \pi )}{2 \pi }\right)}{\Gamma(k+1)}\\
   =-\frac{1}{2\pi i}\sum_{y=0}^{\infty}\int_{C}2 i \pi  a^w w^{-k-1} e^{i \pi  (2 y+1)
   (m+w)} c^{b (m+w)-1}dw\\
   =-\frac{1}{2\pi i}\int_{C}2 i \pi  a^w w^{-k-1} \cdot \sum_{y=0}^{\infty}e^{i \pi  (2 y+1)
   (m+w)} c^{b (m+w)-1}dw\\
   =\frac{1}{2\pi i}\int_{C}\pi 
   a^w w^{-k-1} \csc (\pi  (m+w)) c^{b (m+w)-1}dw
\end{dmath}
from Equation (1.232.2) in \cite{grad} where $Im  (m+w)>0$ in order for the sum to converge.
\section{Definite Integral in Terms of the Hurwitz--Lerch Zeta Function}
\begin{theorem}
For all $Re(m)\leq 1/2, Re(b)>0,Re(c)>0$ then,
\begin{multline}\label{dilf}
\int_{0}^{\infty}\int_{0}^{\infty}e^{-c x} u^{m-1} x^{-b m} E_b(-u) \log ^k\left(a u x^{-b}\right)dudx\\
=(2
   i \pi )^{k+1} e^{i \pi  m} \left(-c^{b m-1}\right) \Phi \left(e^{2 i m
   \pi },-k,-\frac{i (\log (a)+b \log (c)+i \pi )}{2 \pi }\right)
\end{multline}
\end{theorem}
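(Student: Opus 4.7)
The plan is to observe that the two preparatory sections have already done the substantive work, and the theorem follows by equating the common contour integral obtained in each section. First I would point to Equation~(\ref{dici}), which, after using the scalar Mittag-Leffler integral from equation (19) in \cite{mainardi}, the elementary integral (3.326.2) in \cite{grad}, and the Gamma reflection formula (8.334.3) in \cite{grad}, expresses the rescaled double integral
\[
\frac{1}{\Gamma(k+1)}\int_{0}^{\infty}\!\!\int_{0}^{\infty}e^{-cx}u^{m-1}x^{-bm}E_b(-u)\log^{k}\!\bigl(aux^{-b}\bigr)\,du\,dx
\]
as the single contour integral $\frac{1}{2\pi i}\int_{C}\pi a^{w}w^{-k-1}\csc(\pi(m+w))c^{b(m+w)-1}\,dw$.

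Next I would invoke Equation~(\ref{isci}), where the same contour integral is obtained from a completely different route: starting from the Cauchy-type formula (\ref{intro:cauchy}), specializing $y\mapsto\log(a)+b\log(c)+i\pi(2y+1)$, multiplying by the prefactor $-2i\pi e^{i\pi m(2y+1)}c^{bm-1}$, and summing the resulting geometric series via (1.232.2) in \cite{grad}. This identifies the contour integral with
\[
-\frac{(2i\pi)^{k+1}e^{i\pi m}c^{bm-1}}{\Gamma(k+1)}\,\Phi\!\left(e^{2im\pi},-k,-\tfrac{i(\log(a)+b\log(c)+i\pi)}{2\pi}\right).
\]

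The conclusion of the theorem is then obtained by equating the left-hand sides of (\ref{dici}) and (\ref{isci}), since their right-hand sides are literally the same contour integral, and multiplying through by $\Gamma(k+1)$ to clear denominators. The parameter restrictions $\operatorname{Re}(m)\le 1/2$, $\operatorname{Re}(b)>0$, $\operatorname{Re}(c)>0$ come exactly from the convergence conditions imposed earlier: $\operatorname{Re}(c)>0$ and $0<\operatorname{Re}(\pi(m+w))<1$ ensure validity of the $x$ and $u$ integrals and the reflection formula step in (\ref{dici}), while the condition $\operatorname{Im}(m+w)>0$ that guarantees geometric-series convergence in (\ref{isci}) is compatible with the location of the cut and the first-quadrant contour chosen in Section~2.

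The only delicate point — and the main obstacle worth flagging — is ensuring that the interchanges of integration orders in (\ref{dici}) and of summation with the contour integral in (\ref{isci}) are legitimate on the same common contour $C$. This has already been handled: Fubini's theorem is cited via \cite{gelca} for the triple integral, the integrand being of bounded measure on $\mathbb{C}\times[0,\infty)\times[0,\infty)$, and the geometric series converges absolutely and uniformly on $C$ under $\operatorname{Im}(m+w)>0$ so sum and contour integral may be swapped. Once these interchanges are accepted, the proof reduces to a one-line identification of the two contour representations, and there is no further computation to perform.
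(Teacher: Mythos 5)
Your proposal matches the paper's own proof: the theorem is established exactly by observing that the right-hand sides of (\ref{dici}) and (\ref{isci}) are the identical contour integral, equating the left-hand sides, and clearing the $\Gamma(k+1)$ factor. Your additional remarks on the Fubini interchange and the convergence of the geometric series simply make explicit the justifications the paper delegates to Sections 3 and 4, so the argument is the same.
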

\begin{proof}
The right-hand sides of relations (\ref{dici}) and (\ref{isci}) are identical; hence, the left-hand sides of the same are identical too. Simplifying with the Gamma function yields the desired conclusion.
\end{proof}
\begin{example}
The degenerate case.
\begin{equation}
\int_{0}^{\infty}\int_{0}^{\infty}e^{-c x} u^{m-1} x^{-b m} E_b(-u)dudx=\pi  \csc (\pi  m) c^{b
   m-1}
\end{equation}
\end{example}
\begin{proof}
Use Equation (\ref{dilf}) and set $k=0$ and simplify using entry (2) in Table below (64:12:7) in \cite{atlas}.
\end{proof}
\begin{example}
The Hurwitz Zeta function $\zeta(s,v)$
\begin{multline}\label{eq:hurwitz}
\int_{0}^{\infty}\int_{0}^{\infty}\frac{e^{-x} E_{\frac{1}{4}}(-u) \log ^k\left(\frac{i
   u}{\sqrt[4]{x}}\right)}{\sqrt{u} \sqrt[8]{x}}dudx\\
   =i^k 2^{2 k+1} \pi ^{k+1}
   \left(\zeta \left(-k,\frac{3}{8}\right)-\zeta
   \left(-k,\frac{7}{8}\right)\right)
\end{multline}
\end{example}
\begin{proof}
Use Equation (\ref{dilf}) and set $m=1/2,a=i,b=1/4$ and simplify using entry (4) in Table below (64:12:70) in \cite{atlas}.
\end{proof}
\begin{example}
The digamma function $\psi^{(0)}(z)$,
\begin{equation}
\int_{0}^{\infty}\int_{0}^{\infty}\frac{e^{-x} E_{\frac{1}{4}}(-u)}{\sqrt{u} \sqrt[8]{x} \log
   \left(\frac{i u}{\sqrt[4]{x}}\right)}dudx=\frac{1}{2} i \left(\psi
   ^{(0)}\left(\frac{3}{8}\right)-\psi
   ^{(0)}\left(\frac{7}{8}\right)\right)
\end{equation}
\end{example}
\begin{proof}
Use Equation (\ref{eq:hurwitz}) and apply l'Hopital's rule as $k\to -1$ and simplify using equation (64:4:1) in \cite{atlas}.
\end{proof}
\begin{example}
\begin{multline}
\int_{0}^{\infty}\int_{0}^{\infty}\frac{e^{-x} E_{\beta }(-u) x^{-(\beta  (m+n))} \left(u^n x^{\beta 
   m}-u^m x^{\beta  n}\right)}{u \log \left(u x^{-\beta }\right)}dudx\\
   =\log
   \left(\cot \left(\frac{\pi  m}{2}\right) \tan \left(\frac{\pi 
   n}{2}\right)\right)
\end{multline}
\end{example}
\begin{proof}
Use Equation (\ref{dilf}) and form a second equation by replacing $m\to n$ and taking their difference. Next set $k=-1,a=1,c=1$ and simplify using entry (3) in table below (64:12:7) in \cite{atlas}.
\end{proof}
\section{The invariance of index $b$ relative to the Hurwitz--Lerch Zeta Function}
In this section, we evaluate Equation (\ref{dilf}) such that the index of the Mittag-Leffler function $b$ is independent of the right-hand side. This type of integral could involve properties related to orthogonal functions. This invariant property occurs as a result of how the gamma function is chosen for the definite integral of the contour integral to reduce to a trigonometric function. The derivation of this invariant property is not dependent on all the parameters involved.
\begin{example}
The Hurwitz-Lerch Zeta function $\Phi(s,u,v)$,
\begin{multline}\label{eq:phi-inv}
\int_{0}^{\infty}\int_{0}^{\infty}e^{-x} u^{m-1} x^{-b m} E_b(-u) \log ^k\left(a u x^{-b}\right)dudx\\
=(2 i
   \pi )^{k+1} \left(-e^{i \pi  m}\right) \Phi \left(e^{2 i m \pi
   },-k,-\frac{i (\log (a)+i \pi )}{2 \pi }\right)
\end{multline}
\end{example}
\begin{proof}
Use Equation (\ref{dilf}) and set $c=1$ and simplify.
\end{proof}
\begin{example}
The Polylogarithm function $Li_{k}(z)$,
\begin{multline}\label{eq:poly}
\int_{0}^{\infty}\int_{0}^{\infty}e^{-x} u^{m-1} x^{-b m} E_b(-u) \log ^k\left(-u x^{-b}\right)dudx\\
=(2 i
   \pi )^{k+1} \left(-e^{-i \pi  m}\right) \text{Li}_{-k}\left(e^{2 i m \pi
   }\right)
\end{multline}
\end{example}
\begin{proof}
Use Equation (\ref{dilf}) and set $a=-1,c=1$; simplify using Equation~(64:12:2) in \cite{atlas}.
\end{proof}
\begin{example}
The constant $\pi$,
\begin{equation}
\int_{0}^{\infty}\int_{0}^{\infty}\frac{e^{-x} x^{-b/2} E_b(-u)}{\sqrt{u} \log ^2\left(-u
   x^{-b}\right)}dudx=-\frac{\pi }{24}
\end{equation}
\end{example}
\begin{proof}
Use Equation (\ref{eq:poly}) and set $k=-2,m=1/2$ and simplify. 
\end{proof}
\begin{example}
Catalan's constant $K$,
\begin{equation}
\int_{0}^{\infty}\int_{0}^{\infty}\frac{e^{-x} x^{-b/4} E_b(-u)}{u^{3/4} \log ^2\left(-u
   x^{-b}\right)}dudx=-\frac{(-1)^{1/4} \left(\pi ^2-48 i K\right)}{96 \pi
   }
\end{equation}
\end{example}
\begin{proof}
Use Equation (\ref{eq:poly}) and set $k=-2,m=1/2$ and simplify using equation (2.2.1.2.7) in \cite{lewin}.
\end{proof}
\section{The Log-gamma Representation}
\begin{example}
\begin{multline}\label{eq:lg}
\int_{{\mathbb{R}^{2}_{+}}}\frac{x^{-b/4} e^{-c x} E_b(-u) \log \left(\log \left(a u
   x^{-b}\right)\right)}{u^{3/4}}dudx\\
   =\frac{\pi  c^{\frac{b}{4}-1}}{\sqrt{2}} \left((-2+2
   i) \log \left(\frac{\Gamma \left(-\frac{i (\log (a)+b \log (c)+i \pi
   )}{8 \pi }\right)}{\Gamma \left(-\frac{i (\log (a)+b \log (c)+5 i \pi
   )}{8 \pi }\right)}\right)\right.\\
   \left.-(2+2 i) \log \left(\frac{\Gamma \left(-\frac{i
   (\log (a)+b \log (c)+3 i \pi )}{8 \pi }\right)}{\Gamma \left(-\frac{i
   (\log (a)+b \log (c)+7 i \pi )}{8 \pi }\right)}\right)+i \pi +2 \log
   (\pi )+\log (64)\right)
\end{multline}
\end{example}
\begin{proof}
Use Equation (\ref{dilf}) and take the first partial derivative with respect to $k$ and set $k=0$ and simplify using equation (6.7) in \cite{reyn_aims}.
\end{proof}
\begin{example}
\begin{multline}\label{eq:lg1}
\int_{{\mathbb{R}^{2}_{+}}}\frac{e^{-e^{i a} x} x^{\frac{1}{4}-\frac{b}{4}} E_{b-1}(-u) \log
   \left(\log \left(u x^{1-b}\right)+i a\right)}{u^{3/4}}dudx\\
   =\frac{\pi 
   e^{\frac{1}{4} i a (b-5)}}{\sqrt{2}} \left((-2-2 i) \log \left(\frac{\Gamma
   \left(\frac{1}{8} \left(\frac{a b}{\pi }+3\right)\right)}{\Gamma
   \left(\frac{1}{8} \left(\frac{a b}{\pi }+7\right)\right)}\right)\right.\\
   \left.-(2-2 i)
   \log \left(\frac{\Gamma \left(\frac{a b+\pi }{8 \pi }\right)}{\Gamma
   \left(\frac{1}{8} \left(\frac{a b}{\pi }+5\right)\right)}\right)+i \pi +2
   \log (\pi )+\log (64)\right)
\end{multline}
\end{example}
\begin{proof}
Use equation (\ref{eq:lg}) and set $a=e^{ai},c=a,b=b-1$ and simplify.
\end{proof}
\begin{example}
\begin{multline}
\int_{{\mathbb{R}^{2}_{+}}}\frac{e^{-\sqrt[5]{-1} x} E_{\frac{1}{3}}(-u) \log \left(\log
   \left(\frac{u}{\sqrt[3]{x}}\right)+\frac{i \pi }{5}\right)}{u^{3/4}
   \sqrt[12]{x}}dudx\\
   =\frac{(-1)^{19/60} \pi }{\sqrt{2}} \left(\pi -i \log (64)-2 i \log (\pi
   )+(2+2 i) \log \left(\frac{\Gamma \left(\frac{19}{120}\right)}{\Gamma
   \left(\frac{79}{120}\right)}\right)\right.\\
   \left.-(2-2 i) \log \left(\frac{\Gamma
   \left(\frac{49}{120}\right)}{\Gamma
   \left(\frac{109}{120}\right)}\right)\right)
\end{multline}
\end{example}
\begin{proof}
Use equation (\ref{eq:lg1}) and set $a=\pi/5,b=4/3$ and simplify.
\end{proof}
\begin{example}
\begin{multline}
\int_{{\mathbb{R}^{2}_{+}}}\frac{e^{-\sqrt[3]{-1} x} E_{\frac{1}{3}}(-u) \log \left(\log
   \left(\frac{u}{\sqrt[3]{x}}\right)+\frac{i \pi }{3}\right)}{u^{3/4}
   \sqrt[12]{x}}dudx\\
   =\frac{(-1)^{7/36} \pi  \left(\pi -i \log (64)-2 i \log (\pi
   )+(2+2 i) \log \left(\frac{\Gamma \left(\frac{13}{72}\right)}{\Gamma
   \left(\frac{49}{72}\right)}\right)-(2-2 i) \log \left(\frac{\Gamma
   \left(\frac{31}{72}\right)}{\Gamma
   \left(\frac{67}{72}\right)}\right)\right)}{\sqrt{2}}
\end{multline}
\end{example}
\begin{proof}
Use equation (\ref{eq:lg1}) and set $a=\pi/3,b=4/3$ and simplify.
\end{proof}
\begin{example}
\begin{multline}
\int_{{\mathbb{R}^{2}_{+}}}\frac{e^{-\sqrt[4]{-1} x} E_{\frac{1}{4}}(-u) \log \left(\log
   \left(\frac{u}{\sqrt[4]{x}}\right)+\frac{i \pi }{4}\right)}{u^{3/4}
   \sqrt[16]{x}}dudx\\
   =\frac{(-1)^{17/64} \pi  }{\sqrt{2}}\left(\pi -i \log (64)-2 i \log (\pi
   )+(2+2 i) \log \left(\frac{\Gamma \left(\frac{21}{128}\right)}{\Gamma
   \left(\frac{85}{128}\right)}\right)\right.\\
   \left.-(2-2 i) \log \left(\frac{\Gamma
   \left(\frac{53}{128}\right)}{\Gamma
   \left(\frac{117}{128}\right)}\right)\right)
\end{multline}
\end{example}
\begin{proof}
Use equation (\ref{eq:lg1}) and set $a=\pi/4,b=5/4$ and simplify.
\end{proof}
\begin{example}
\begin{multline}
\int_{{\mathbb{R}^{2}_{+}}}\frac{e^{-(-1)^{3/7} x} E_{\frac{1}{5}}(-u) \log \left(\log
   \left(\frac{u}{\sqrt[5]{x}}\right)+\frac{3 i \pi }{7}\right)}{u^{3/4}
   \sqrt[20]{x}}dudx\\
   =\frac{(-1)^{13/140} \pi  }{\sqrt{2}}\left(\pi -i \log (64)-2 i \log (\pi
   )+(2+2 i) \log \left(\frac{\Gamma \left(\frac{53}{280}\right)}{\Gamma
   \left(\frac{193}{280}\right)}\right)\right.\\
   \left.-(2-2 i) \log \left(\frac{\Gamma
   \left(\frac{123}{280}\right)}{\Gamma
   \left(\frac{263}{280}\right)}\right)\right)
\end{multline}
\end{example}
\begin{proof}
Use equation (\ref{eq:lg1}) and set $a=3\pi/7,b=6/5$ and simplify.
\end{proof}
\begin{example}
\begin{multline}\label{eq:lg2}
\int_{{\mathbb{R}^{2}_{+}}}\frac{x^{-b/2} e^{-c x} E_b(-u) \log \left(\log \left(a u
   x^{-b}\right)\right)}{\sqrt{u}}dudx\\
   =\frac{1}{2} \pi  c^{\frac{b}{2}-1} \left(\log
   \left(\frac{16 \pi ^2 \Gamma \left(-\frac{i (\log (a)+b \log (c)+3 i \pi )}{4
   \pi }\right)^4}{\Gamma \left(-\frac{i (\log (a)+b \log (c)+i \pi )}{4 \pi
   }\right)^4}\right)+i \pi \right)
\end{multline}
\end{example}
\begin{proof}
Use equation (\ref{dilf}) and set $m=1/2$ then take the first partial derivative with respect to $k$ and set $k=0$ and simplify using equation (25.11.18) in \cite{dlmf}.
\end{proof}
\begin{example}
\begin{multline}
\int_{{\mathbb{R}^{2}_{+}}}\frac{e^{-\frac{\pi  x}{4}} E_{\frac{4}{3}}(-u) \log \left(\log
   \left(-\frac{u}{x^{4/3}}\right)\right)}{\sqrt{u} x^{2/3}}dudx\\
   =\frac{\pi ^{2/3}}{\sqrt[3]{2}}
   \left(i \pi +\log \left(\frac{16 \pi ^2 \Gamma \left(1+\frac{i \log
   \left(\frac{4}{\pi }\right)}{3 \pi }\right)^4}{\Gamma
   \left(\frac{1}{2}+\frac{i \log \left(\frac{4}{\pi }\right)}{3 \pi
   }\right)^4}\right)\right)
\end{multline}
\end{example}
\begin{proof}
Use equation (\ref{eq:lg2}) and set $a=-1,b=4/3,c=\pi/4$ and simplify.
\end{proof}
\begin{example}
\begin{multline}
\int_{{\mathbb{R}^{2}_{+}}}\frac{e^{-\frac{\pi  x}{5}} E_{\frac{5}{4}}(-u) \log \left(\log
   \left(\frac{i u}{x^{5/4}}\right)\right)}{\sqrt{u} x^{5/8}}dudx\\
   =\frac{1}{2}
   5^{3/8} \pi ^{5/8} \left(i \pi +\log \left(\frac{16 \pi ^2 \Gamma
   \left(\frac{7}{8}+\frac{5 i \log \left(\frac{5}{\pi }\right)}{16 \pi
   }\right)^4}{\Gamma \left(\frac{3}{8}+\frac{5 i \log \left(\frac{5}{\pi
   }\right)}{16 \pi }\right)^4}\right)\right)
\end{multline}
\end{example}
\begin{proof}
Use equation (\ref{eq:lg2}) and set $a=i,b=5/4,c=\pi/5$ and simplify.
\end{proof}
\begin{example}
\begin{multline}
\int_{{\mathbb{R}^{2}_{+}}}\frac{e^{-\frac{1}{7} (\pi +7 i) x} E_{\frac{6}{5}}(-u) \log \left(\log
   \left(\frac{(1+i) u}{x^{6/5}}\right)\right)}{\sqrt{u} x^{3/5}}dudx\\
   =\frac{\pi }{2 \left(\frac{1}{7} (\pi +7
   i)\right)^{2/5}}
   \left(i \pi +\log \left(\frac{16 \pi ^2 \Gamma \left(\frac{3}{4}-\frac{i
   \left(5 \log (1+i)+6 \log \left(\frac{1}{7} (7 i+\pi )\right)\right)}{20 \pi
   }\right)^4}{\Gamma \left(\frac{1}{4}-\frac{i \left(5 \log (1+i)+6 \log
   \left(\frac{1}{7} (7 i+\pi )\right)\right)}{20 \pi
   }\right)^4}\right)\right)
\end{multline}
\end{example}
\begin{proof}
Use equation (\ref{eq:lg2}) and set $a=1+i,b=6/5,c=\pi/7+i$ and simplify.
\end{proof}
\section{Definite integrals of the generalized Mittag-Leffler function}
In this section we take a different approach in deriving definite integrals using the contour integral method in \cite{reyn4}.
The contour integral representation form involving the generalized integral form is given by;
\begin{multline}\label{eq:j1}
\frac{1}{2\pi i}\int_{C}\int_{0}^{b}\frac{a^w \delta ^f \nu ^h (-\theta )^j w^{-k-1} \binom{\lambda }{h} x^{\gamma  f+h \mu +j \tau +m+w}}{j!\Gamma (f \alpha +\beta )}dxdw\\
=\frac{1}{2\pi i}\int_{C}\sum_{f=0}^{\infty}\sum_{h=0}^{\infty}\sum_{j=0}^{\infty}\frac{a^w \delta ^f \nu ^h (-\theta )^j w^{-k-1} \binom{\lambda }{h} b^{\gamma  f+h\mu +j \tau +m+w+1}}{j! \Gamma (f \alpha +\beta ) (\gamma  f+h \mu +j \tau +m+w+1)}dw
\end{multline}
where $Re(\gamma)>1,Re(\mu)>0,Re(b)>0$.
\subsection{Left-hand side contour integral representation}
Using a generalization of Cauchy's integral formula \ref{intro:cauchy} and the procedure in section (2.1), we form the definite integral given by;
\begin{multline}\label{eq:j2}
\int_{0}^{b}\frac{x^m e^{\theta  \left(-x^{\tau }\right)} \log ^k(a x) \left(\nu  x^{\mu }+1\right)^{\lambda } E_{\alpha
   ,\beta }\left(x^{\gamma } \delta \right)}{k!}dx\\
=\frac{1}{2\pi i}\int_{C}\int_{0}^{b}w^{-k-1} x^m (a x)^w e^{\theta  \left(-x^{\tau }\right)} \left(\nu
    x^{\mu }+1\right)^{\lambda } E_{\alpha ,\beta }\left(x^{\gamma } \delta \right)dxdw
    \end{multline}
where $Re(\gamma)>1,Re(\mu)>0,Re(b)>0$.
\subsection{Right-hand side contour integral}
Using a generalization of Cauchy's integral formula \ref{intro:cauchy}, and the procedure in [Reynolds, \href{https://doi.org/10.48550/arXiv.2502.14876}{Section 2.2}] we get;
\begin{multline}\label{eq:j3}
\sum_{f=0}^{\infty}\sum_{h=0}^{\infty}\sum_{j=0}^{\infty}\frac{\delta ^f (-\theta )^j \nu ^h \binom{\lambda }{h} \Gamma (1+k,-((1+m+f \gamma +h \mu +j \tau ) \log (a
   b)))}{j! k! \Gamma (f \alpha +\beta ) a^{1+m+f \gamma +h \mu +j \tau } (-1-m-f \gamma -h \mu -j \tau
   )^{k+1}}\\
=-\frac{1}{2\pi i}\int_{C}\sum_{f=0}^{\infty}\sum_{h=0}^{\infty}\sum_{j=0}^{\infty}\frac{a^w b^{1+m+w+f \gamma +h \mu +j \tau } w^{-1-k} \delta ^f (-\theta )^j \nu ^h \binom{\lambda
   }{h}}{(1+m+w+f \gamma +h \mu +j \tau ) j! \Gamma (f \alpha +\beta )}dw
\end{multline}
where $Re(\gamma)>1,Re(\mu)>0,Re(b)>0$.
from equation [Wolfram, \href{http://functions.wolfram.com/07.27.02.0002.01}{07.27.02.0002.01}] where $|Re(b)|<1$.
\section{Derivations and evaluations involving the definite integral of generalized Mittag-Leffler function}
\begin{theorem}
Generalized Mittag-Leffler integral.
\begin{multline}\label{eq:mitt}
\int_0^b e^{-x^{\tau } \theta } x^m \left(1+x^{\mu } \nu \right)^{\lambda } \log ^k\left(\frac{1}{a
   x}\right) E_{\alpha ,\beta }\left(x^{\gamma } \delta \right) \, dx\\
=\sum _{f=0}^{\infty } \sum _{h=0}^{\infty }
   \sum _{j=0}^{\infty } \frac{a^{-1-m-f \gamma -h \mu -j \tau } \delta ^f (-\theta )^j \nu ^h \binom{\lambda }{h}
   \Gamma \left(1+k,(1+m+f \gamma +h \mu +j \tau ) \log \left(\frac{1}{a b}\right)\right)}{j! \Gamma (f \alpha
   +\beta ) (1+m+f \gamma +h \mu +j \tau )^{k+1}}
\end{multline}
where $Re(\tau)>0,Re(\mu)>1$.
\end{theorem}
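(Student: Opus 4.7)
The plan is to compare the two contour integral representations (\ref{eq:j2}) and (\ref{eq:j3}) constructed in the preceding section: one side of (\ref{eq:j2}) expresses $\tfrac{1}{k!}$ times the left-hand side of (\ref{eq:mitt}) as a $w$-contour integral, while (\ref{eq:j3}) expresses the sought triple series (up to sign and factors of $k!$) as the same contour integral. Once the two contour integrals are matched, multiplying through by $k!$ and collecting signs produces (\ref{eq:mitt}).

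First I would apply Cauchy's formula (\ref{intro:cauchy}) with $y=\log(1/(ax))$ and multiply both sides by
\[
e^{-x^{\tau}\theta}\,x^{m}\,(1+x^{\mu}\nu)^{\lambda}\,E_{\alpha,\beta}(x^{\gamma}\delta),
\]
then integrate over $x\in[0,b]$, which gives (\ref{eq:j2}) directly. Next I would expand the integrand on the right of (\ref{eq:j2}) into a triple power series: the exponential as $\sum_{j}(-\theta)^{j}x^{\tau j}/j!$, the binomial factor as $\sum_{h}\binom{\lambda}{h}\nu^{h}x^{\mu h}$, and the generalized Mittag-Leffler factor as $\sum_{f}\delta^{f}x^{\gamma f}/\Gamma(f\alpha+\beta)$. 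After justifying the interchange of the three sums with the $x$-integral and the $w$-contour integral by Fubini, as on p.~178 of \cite{gelca}, the elementary integral
\[
\int_{0}^{b}x^{m+w+\gamma f+\mu h+\tau j}\,dx=\frac{b^{\,1+m+w+\gamma f+\mu h+\tau j}}{1+m+w+\gamma f+\mu h+\tau j}
\]
produces precisely the contour integral appearing on the right-hand side of (\ref{eq:j1}), which is the same contour integral that sits on the right of (\ref{eq:j3}).

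To finish, the inner contour integral must be identified with the incomplete gamma function. For each triple $(f,h,j)$ it takes the shape
\[
\frac{1}{2\pi i}\int_{C}\frac{a^{w}\,b^{\,s+w}\,w^{-k-1}}{s+w}\,dw,\qquad s=1+m+\gamma f+\mu h+\tau j,
\]
and by the Wolfram entry cited immediately after (\ref{eq:j3}) this evaluates to
\[
\frac{\Gamma\!\bigl(1+k,\,s\log(1/(ab))\bigr)}{k!\,a^{s}\,s^{k+1}}
\]
up to the standard $(-1)^{k+1}$ bookkeeping already baked into the way (\ref{eq:j3}) is written. Summing term by term over $(f,h,j)$ and multiplying by the $k!$ that was pulled out at the beginning reconstructs the right-hand side of (\ref{eq:mitt}) exactly.

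The main obstacle will be the rigorous interchange of the three infinite summations with the finite-range $x$-integration and the Hankel-type $w$-contour integration. Unlike in the double-integral theorem of Section~4, where the $u$-integral collapses to a single trigonometric factor, here one has a genuine triple series against a non-elementary Mittag-Leffler kernel on a bounded interval, and for general complex $\lambda$ the binomial series is not a polynomial. Establishing absolute and uniform convergence on $C\times[0,b]$, which is exactly where the hypotheses $\operatorname{Re}(\tau)>0$, $\operatorname{Re}(\mu)>1$ and the tacit bound on $|b|$ from the Wolfram entry do the real work, is the technical heart of the argument; once that is granted, everything else is bookkeeping with the gamma function.
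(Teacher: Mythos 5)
Your proposal is correct and follows essentially the same route as the paper: the paper's proof likewise equates the left-hand sides of (\ref{eq:j2}) and (\ref{eq:j3}) on the grounds that their right-hand sides coincide with the contour integral in (\ref{eq:j1}), and then simplifies the gamma-function factors. Your additional detail on the triple series expansion, the Fubini interchange, and the sign/$k!$ bookkeeping merely fills in steps the paper delegates to the preceding section.
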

\begin{proof}
Since the right-hand sides of equations (\ref{eq:j2}) and (\ref{eq:j3}) are equal relative to equation (\ref{eq:j1}), we may equate the left-hand sides and simplify the gamma function to yield the stated result.
\end{proof}
\begin{example}
Here we use equation (\ref{eq:mitt}) and set $k\to 0,a\to 1$ and simplify the incomplete gamma function using equation [Wolfram, \href{http://functions.wolfram.com/06.07.03.0005.01}{01}];
\begin{multline}\label{eq:mitt_1}
\int_0^{\infty } e^{-x^{\tau } \theta } x^m \left(1+x^{\mu } \nu \right)^{\lambda } E_{\alpha ,\beta
   }\left(x^{\gamma } \delta \right) \, dx=\frac{1}{\tau  \theta ^{\frac{m+1}{\tau }}}\sum _{j=0}^{\infty } \sum _{k=0}^{\infty } \frac{\delta ^j \nu ^k
   \binom{\lambda }{k}}{\theta ^{\frac{j \gamma +k \mu }{\tau
   }}}\frac{ \Gamma \left(\frac{1+m+j \gamma +k \mu }{\tau }\right)}{ \Gamma (j \alpha +\beta )}
\end{multline}
where $Re(\theta)>0$.
\end{example}
\begin{example}
Here we use equation (\ref{eq:mitt_1}) and set $\lambda \to -1,\theta \to s,x\to t,m\to m-1,f\to j$ then take the indefinite integral with respect to $\nu$, replace $m\to \mu +m$ and simplify;
\begin{multline}\label{eq:mitt_2}
\int_0^{\infty } e^{-s t^{\tau }} t^{m-1} \log \left(1+t^{\mu } \nu \right) E_{\alpha ,\beta }\left(t^{\gamma
   } \delta \right) \, dt\\
=\frac{\nu  }{\tau  s^{\frac{m+\mu }{\tau }}}\sum _{j=0}^{\infty } \sum _{h=0}^{\infty } \frac{ \Gamma \left(\frac{m+j \gamma +\mu +h \mu }{\tau
   }\right)}{(1+h) \Gamma (j \alpha +\beta )}\left(s^{-\frac{\gamma }{\tau
   }} \delta \right)^j \left(-s^{-\frac{\mu }{\tau }} \nu \right)^h
\end{multline}
where $Re(\tau)>1,0< Re(\mu)<1$.
\end{example}
\begin{example}
Here we use equation (\ref{eq:mitt_2}) and form a second equation by $\nu \to -\nu$ and take their difference and simplify;
\begin{multline}
\int_0^{\infty } e^{-s t^{\tau }} t^{-1+m} \log \left(1-t^{2 \mu } \nu ^2\right) E_{\alpha ,\beta
   }\left(t^{\gamma } \delta \right) \, dt\\
=\sum _{j=0}^{\infty } \sum _{h=0}^{\infty } \frac{s^{-\frac{m+\mu }{\tau
   }} \left(s^{-\frac{\gamma }{\tau }} \delta \right)^j \nu  \left((-1)^h-1\right) \left(s^{-\frac{\mu }{\tau }} \nu
   \right)^h \Gamma \left(\frac{m+j \gamma +\mu +h \mu }{\tau }\right)}{(1+h) \tau  \Gamma (j \alpha +\beta
   )}
\end{multline}
where $Re(\tau)>1,0< Re(\mu)<1$.
\end{example}
\begin{example}
 Here we use equation (\ref{eq:mitt_2}) and form a second equation by $\nu \to -\nu$ and add both equations and simplify;
 \begin{multline}
\int_0^{\infty } e^{-s t^{\tau }} t^{-1+m} \tanh ^{-1}\left(t^{\mu } \nu \right) E_{\alpha ,\beta
   }\left(t^{\gamma } \delta \right) \, dt\\
=\sum _{j=0}^{\infty } \sum _{h=0}^{\infty } \frac{\left(1+(-1)^h\right)
   s^{-\frac{m+\mu }{\tau }} \left(s^{-\frac{\gamma }{\tau }} \delta \right)^j \nu  \left(s^{-\frac{\mu }{\tau }}
   \nu \right)^h \Gamma \left(\frac{m+j \gamma +\mu +h \mu }{\tau }\right)}{2 (1+h) \tau  \Gamma (j \alpha +\beta
   )}
\end{multline}
where $Re(\tau)>1,0< Re(\mu)<1$.
\end{example}
\begin{example}
Here we use equation (\ref{eq:mitt_1}) and set $\lambda \to 0,\mu \to 0,\nu \to 1,\theta \to s,x\to t,m\to m-1,f\to j$ and simplify;
\begin{equation}\label{eq:mitt_3}
\int_0^{\infty } e^{-s t^{\tau }} t^{m-1} E_{\alpha ,\beta }\left(t^{\gamma } \delta \right) \,
   dt=\frac{1}{\tau  s^{m/\tau }}\sum _{j=0}^{\infty } \frac{\Gamma \left(\frac{m+j
   \gamma }{\tau }\right)}{\Gamma (j \alpha +\beta )}\left(\frac{\delta }{s^{\gamma /\tau }}\right)^j 
\end{equation}
where $Re(\tau)>1$.
\end{example}
\begin{example}
Here we use equation (\ref{eq:mitt_1}) and set $\lambda \to 0,\mu \to 0,\nu \to 1,\tau \to 1,\theta \to s,x\to t,m\to m-1,f\to j$ and simplify;
\begin{equation}
\int_0^{\infty } e^{-s t} t^{m-1} E_{\alpha ,\beta }\left(t^{\gamma } \delta \right) \, dt=\frac{1}{s^m}\sum
   _{j=0}^{\infty }\left(\frac{\delta }{s^{\gamma }}\right)^j  \frac{\Gamma (m+j \gamma )}{\Gamma (\beta+j \alpha)}
\end{equation}
where $Re(s)>0$.
\end{example}
\begin{example}
Derivation of equation (C.4(16)) on page 312 in \cite{gorenflo} in terms of the Fox-Wright function \cite{fox}. Errata. Here we use equation (\ref{eq:mitt_1}) and set $\lambda \to 0,\mu \to 0,\nu \to 1$ then $\tau \to 1,\theta \to s,x\to t$ and $\delta \to a,m\to p-1,f\to k$ and simplify;
\begin{multline}\label{eq:mitt_3}
\int_0^{\infty } e^{-s t} t^{-1+p} E_{\alpha ,\beta }\left(a t^{\gamma }\right) \, dt=\sum _{n=0}^{\infty }
   \frac{a^n s^{-p-n \gamma } \Gamma (p+n \gamma )}{\Gamma (n \alpha +\beta )}\neq\frac{1}{s^p}\sum _{n=0}^{\infty }
   \frac{\Gamma (p+\gamma  n) \left(\frac{a}{s^{\alpha }}\right)^n}{\Gamma (\beta +\alpha  n)}
\end{multline}
where $Re(s)>0,Re(\gamma)>1$.
\end{example}
\begin{example}
Derivation of equation (C.4(14)) on page 312 in \cite{gorenflo}.  Here we use equation (\ref{eq:mitt_3}) and set $\gamma \to \alpha ,p\to \beta $ and simplify;
\begin{equation}
\int_0^{\infty } e^{-s t} t^{-1+\beta } E_{\alpha ,\beta }\left(a t^{\alpha }\right) \, dt=\frac{s^{\alpha
   -\beta }}{-a+s^{\alpha }}
\end{equation}
where $Re(s)>1,Re(\alpha)>0$.
\end{example}
\begin{example}
Definite integral of the Mittag-Leffler function in terms of the Hurwitz-Lerch transcendent. Here we use equation (\ref{eq:mitt}) and set $a\to 1,b\to 1,\lambda \to -1$ and simplify using equation [DLMF, \href{https://dlmf.nist.gov/25.14.i}{25.14.1}];
\begin{multline}\label{eq:mitt_lerch}
\int_0^1 \frac{e^{-x^{\tau } \theta } x^m \log ^k\left(\frac{1}{x}\right) E_{\alpha ,\beta }\left(x^{\gamma }
   \delta \right)}{1+x^{\mu } \nu } \, dx\\
=\frac{\Gamma (1+k) }{\mu ^{k+1}}\sum _{j=0}^{\infty } \sum _{l=0}^{\infty }
   \frac{\delta ^l (-\theta )^j }{\Gamma
   (j+1) \Gamma (l \alpha +\beta )}\Phi \left(-\nu ,1+k,\frac{1+m+l \gamma +j \tau }{\mu }\right)
\end{multline}
where $Re(\tau)>0,Re(\mu)>1$.
\end{example}
\begin{example}
Here we use equation (\ref{eq:mitt_lerch}) and form a second equation by replacing $m\to s$ and take their difference. Next apply l'Hopital's rule as $k\to -1$ and simplify;
\begin{multline}
\int_0^1 \frac{e^{-x^{\tau } \theta } \left(x^m-x^s\right) E_{\alpha ,\beta }\left(x^{\gamma } \delta
   \right)}{\left(1+x^{2 \mu }\right) \log \left(\frac{1}{x}\right)} \, dx\\
=\sum _{j=0}^{\infty } \sum _{l=0}^{\infty} \frac{\delta ^l (-\theta )^j }{\Gamma (1+j) \Gamma (l \alpha+\beta )}\log \left(\frac{\Gamma \left(\frac{1+m+l \gamma +j \tau }{4 \mu }\right) \Gamma\left(\frac{1+s+l \gamma +2 \mu +j \tau }{4 \mu }\right)}{\Gamma \left(\frac{1+s+l \gamma +j \tau }{4 \mu}\right) \Gamma \left(\frac{1+m+l \gamma +2 \mu +j \tau }{4 \mu }\right)}\right)
\end{multline}
where $Re(\tau)>1,Re(\mu)>0$.
\end{example}
\begin{example}
Malmsten definite integral form. Here we use equation (\ref{eq:mitt_lerch}) and set $\nu\to 1$ and apply l'Hopital's rule to the first partial derivative with respect to $k$ as $k\to 0$ and simplify in terms of Euler's constant, the Stieltjes constant and digamma function respectively using equations [Wolfram, \href{http://functions.wolfram.com/02.06.02.0001.01}{01}],  [Wolfram, \href{http://functions.wolfram.com/10.05.02.0001.01}{01}], and [Wolfram, \href{http://functions.wolfram.com/06.14.02.0001.01}{01}];
\begin{multline}
\int_0^1 \frac{e^{-x^{\tau } \theta } x^m \log \left(\log \left(\frac{1}{x}\right)\right) E_{\alpha ,\beta
   }\left(x^{\gamma } \delta \right)}{1+x^{\mu }} \, dx\\
=\sum _{j=0}^{\infty } \sum _{l=0}^{\infty } \frac{\delta ^l(-\theta )^j }{2 \mu \Gamma (1+j) \Gamma (l \alpha +\beta )}\left((\gamma +\log (2 \mu )) \left(\psi ^{(0)}\left(\frac{1+m+l \gamma +j \tau }{2 \mu
   }\right)\right.\right. \\ \left.\left.
-\psi ^{(0)}\left(\frac{1+m+l \gamma +\mu +j \tau }{2 \mu }\right)\right)-\gamma_1\left(\frac{1+m+l\gamma +j \tau }{2 \mu }\right)+\gamma _1\left(\frac{1+m+l \gamma +\mu +j \tau }{2 \mu }\right)\right)
\end{multline}
where $Re(\tau)>1,Re(\mu)>0$.
\end{example}
\begin{example}
Here we use equation (\ref{eq:mitt_lerch}) and set $\nu \to 1$ and simplify in terms of the Hurwitz zeta function using [Wolfram, \href{http://functions.wolfram.com/10.06.03.0025.01}{01}];
\begin{multline}\label{eq:mitt_hurwitz}
\int_0^1 \frac{e^{-x^{\tau } \theta } x^m \log ^k\left(\frac{1}{x}\right) E_{\alpha ,\beta }\left(x^{\gamma }
   \delta \right)}{1-x^{\mu }} \, dx=\sum _{j=0}^{\infty } \sum _{l=0}^{\infty } \frac{\delta ^l (-\theta )^j \mu
   ^{-1-k} \Gamma (1+k) \zeta \left(1+k,\frac{1+m+l \gamma +j \tau }{\mu }\right)}{\Gamma (1+j) \Gamma (l \alpha
   +\beta )}
\end{multline}
where $Re(\tau)>1,Re(\mu)>0$.
\end{example}
\begin{example}
Here we use equation (\ref{eq:mitt_hurwitz}) and form a second equation by setting $m\to s$ and take their difference and apply l'Hopital's rule as $k\to 0$ and simplify using [Wolfram, \href{http://functions.wolfram.com/06.14.02.0001.01}{01}];
\begin{multline}
\int_0^1 \frac{e^{-x^{\tau } \theta } \left(x^m-x^s\right) E_{\alpha ,\beta }\left(x^{\gamma } \delta
   \right)}{-1+x^{\mu }} \, dx\\
=\sum _{j=0}^{\infty } \sum _{l=0}^{\infty } \frac{\delta ^l (-\theta )^j \left(\psi
   ^{(0)}\left(\frac{1+m+l \gamma +j \tau }{\mu }\right)-\psi ^{(0)}\left(\frac{1+s+l \gamma +j \tau }{\mu
   }\right)\right)}{\mu  \Gamma (1+j) \Gamma (l \alpha +\beta )}
\end{multline}
where $Re(\tau)>1,Re(\mu)>0$.
\end{example}
\begin{example}
Here we use equation (\ref{eq:mitt_lerch}) and set $\nu\to 1$ and simplify in terms of the Hurwitz zeta function using equation [Wolfram, \href{http://functions.wolfram.com/10.06.03.0073.01}{01}], then take the first partial derivative with respect to $k$ and set $k\to -1/2$ and simplify;
\begin{multline}
\int_0^1 \frac{e^{-x^{\tau } \theta } \left(x^m-x^s\right) \log \left(\log \left(\frac{1}{x}\right)\right)
   E_{\alpha ,\beta }\left(x^{\gamma } \delta \right)}{\left(-1+x^{\mu }\right) \sqrt{\log
   \left(\frac{1}{x}\right)}} \, dx\\
=\sum _{j=0}^{\infty } \sum _{l=0}^{\infty } \frac{\sqrt{\pi } \delta ^l (-\theta
   )^j }{\sqrt{\mu
   } \Gamma (1+j) \Gamma (l \alpha +\beta )}\left(\left(\zeta \left(\frac{1}{2},\frac{1+m+l \gamma +j \tau }{\mu }\right)\right.\right. \\ \left.\left.
-\zeta
   \left(\frac{1}{2},\frac{1+s+l \gamma +j \tau }{\mu }\right)\right) (\gamma +\log (4)+\log (\mu
   ))\right. \\ \left.
-\zeta'\left(\frac{1}{2},\frac{1+m+l \gamma +j \tau }{\mu
   }\right)+\zeta'\left(\frac{1}{2},\frac{1+s+l \gamma +j \tau }{\mu }\right)\right)
\end{multline}
where $Re(\tau)>1,Re(\mu)>0$.
\end{example}
\begin{example}
Malmsten integral form. Here we use equation (\ref{eq:mitt_lerch}) and set $\nu\to 1$ and simplify in terms of the Hurwitz zeta function using equation [Wolfram, \href{http://functions.wolfram.com/10.06.03.0073.01}{01}], then form a second equation by replacing $m\to s$ then take the first partial derivative with respect to $k$ and apply l'Hopital's rule as $k\to -1$ and simplify using [Wolfram, \href{http://functions.wolfram.com/06.11.02.0001.01}{01}];
\begin{multline}
\int_0^1 \frac{e^{-x^{\tau } \theta } \left(x^m-x^s\right) \log \left(\log \left(\frac{1}{x}\right)\right)
   E_{\alpha ,\beta }\left(x^{\gamma } \delta \right)}{\left(1+x^{\mu }\right) \log \left(\frac{1}{x}\right)} \,
   dx\\
=\sum _{j=0}^{\infty } \sum _{l=0}^{\infty } \frac{\delta ^l (-\theta )^j }{2 \Gamma
   (1+j) \Gamma (l \alpha +\beta )}\left(2 \log (2 \mu  \exp (\gamma ))
   \log \left(\frac{\Gamma \left(\frac{1+s+l \gamma +j \tau }{2 \mu }\right) \Gamma \left(\frac{1+m+l \gamma +\mu +j
   \tau }{2 \mu }\right)}{\Gamma \left(\frac{1+m+l \gamma +j \tau }{2 \mu }\right) \Gamma \left(\frac{1+s+l \gamma
   +\mu +j \tau }{2 \mu }\right)}\right)\right. \\ \left.
+\zeta''\left(0,\frac{1+m+l \gamma +j \tau }{2 \mu
   }\right)-\zeta''\left(0,\frac{1+s+l \gamma +j \tau }{2 \mu
   }\right)\right. \\ \left.
-\zeta''\left(0,\frac{1+m+l \gamma +\mu +j \tau }{2 \mu
   }\right)+\zeta''\left(0,\frac{1+s+l \gamma +\mu +j \tau }{2 \mu }\right)\right)
\end{multline}
where $Re(\tau)>1,Re(\mu)>0$.
\end{example}
\begin{example}
Here we use equation (\ref{eq:mitt}) and set $k\to -1,\tau \to 0,\theta \to 1,\lambda \to -\frac{1}{2},\mu \to 2,b\to 1$ next form a second equation by replacing $a\to 1/a$ and take their difference and simplify;
\begin{multline}
\int_0^1 \frac{x^m E_{\alpha ,\beta }\left(x^{\gamma } \delta \right)}{\sqrt{1+x^2 \nu } \left(\log
   ^2(a)-\log ^2(x)\right)} \, dx\\
=\sum _{f=0}^{\infty } \sum _{h=0}^{\infty } \frac{\delta ^f \nu ^h
   \binom{-\frac{1}{2}}{h} }{2 a \Gamma
   (f \alpha +\beta ) \log (a)}\left(-a^{-2 h-m-f \gamma } \Gamma \left(0,(1+2 h+m+f \gamma ) \log
   \left(\frac{1}{a}\right)\right)\right. \\ \left.+a^{2+2 h+m+f \gamma } \Gamma (0,(1+2 h+m+f \gamma ) \log (a))\right)
\end{multline}
where $Re(\delta)>0,Re(\gamma)>1$.
\end{example}
\begin{example}
Here we use equation (\ref{eq:mitt}) and set $\tau \to 0,\theta \to 1,k\to 0,a\to 1$ and simplify;
\begin{multline}
\int_0^b x^{m-1} \left(1-x^{\mu } \nu \right)^{\lambda } E_{\alpha ,\beta }\left(x^{\gamma } \delta \right)
   \, dx\\
=\sum _{f=0}^{\infty } \frac{b^{m+f \gamma } \delta ^f}{(m+f \gamma ) \Gamma (f \alpha +\beta )} \, _2F_1\left(-\lambda ,\frac{m+f \gamma }{\mu
   };\frac{m+f \gamma +\mu }{\mu };b^{\mu } \nu \right)
\end{multline}
where $Re(\gamma)>1,Re(\mu)>0$.
\end{example}
\begin{example}
Here we use equation (\ref{eq:mitt}) and set $\tau \to 0,\theta \to 1,b\to 1,a\to 1,\lambda \to -1$ then take the $n$-th partial derivative with respect to $m$ and simplify;
\begin{multline}\label{eq:mitt_lerch_sum}
\int_0^1 \frac{x^m \log ^k\left(\frac{1}{x}\right) \log ^n(x) E_{\alpha ,\beta }\left(x^{\gamma } \delta
   \right)}{1+x^{\mu } \nu } \, dx\\
=-\sum _{f=0}^{\infty } \frac{\pi  \delta ^f \mu ^{-1-k-n} \csc (k \pi ) \Phi
   \left(-\nu ,1+k+n,\frac{1+m+f \gamma }{\mu }\right)}{\Gamma (-k-n) \Gamma (f \alpha +\beta )}
\end{multline}
where $Re(\gamma)>1,Re(\mu)>0,n=0,1,2,..,$.
\end{example}
\begin{example}
Here we use equation (\ref{eq:mitt_lerch_sum}) and take the first partial derivative with respect to $k$ set $k\to -1/2$ and simplify;
\begin{multline}\label{eq:mitt_lerch_sum1}
\int_0^1 \frac{x^m \log ^n(x) \log \left(\log \left(\frac{1}{x}\right)\right) E_{\alpha ,\beta
   }\left(x^{\gamma } \delta \right)}{\left(1+x^{\mu } \nu \right) \sqrt{\log \left(\frac{1}{x}\right)}} \, dx\\
=\sum
   _{f=0}^{\infty } -\frac{\pi  \delta ^f \mu ^{-\frac{1}{2}-n} }{\Gamma \left(\frac{1}{2}-n\right) \Gamma (f \alpha +\beta )}\left(\Phi \left(-\nu ,\frac{1}{2}+n,\frac{1+m+f
   \gamma }{\mu }\right) \left(\log (\mu )-\psi^{(0)}\left(\frac{1}{2}-n\right)\right)\right. \\ \left.
-\Phi'\left(-\nu ,\frac{1}{2}+n,\frac{1+m+f\gamma }{\mu }\right)\right)
\end{multline}
where $Re(\gamma)>1,Re(\mu)>0,n=0,1,2,..,$
\end{example}
\begin{example}
Here we use equation (\ref{eq:mitt_lerch_sum1}) and form a second equation by replacing $m\to s$ and take their difference and set $n\to -1$ and simplify;
\begin{multline}
\int_0^1 \frac{\left(x^s-x^m\right) \log ^k\left(\frac{1}{x}\right) E_{\alpha ,\beta }\left(x^{\gamma }
   \delta \right)}{\left(1+x^{\mu } \nu \right) \log (x)} \, dx\\
=\sum _{f=0}^{\infty } \frac{\pi  \delta ^f \mu
   ^{-k} \csc (k \pi ) \left(\Phi \left(-\nu ,k,\frac{1+m+f \gamma }{\mu }\right)-\Phi \left(-\nu ,k,\frac{1+s+f
   \gamma }{\mu }\right)\right)}{\Gamma (1-k) \Gamma (f \alpha +\beta )}
\end{multline}
where $Re(\gamma)>1,Re(\mu)>0$
\end{example}
%
%
%
%
\newpage
\section{Summary Table of Double Integrals Involving the Mittag-Leffler function $E_b(-u)$}
Some mathematical constants used in this table are Glaisher's constant, $A$ given by equation (5.17.6) in \cite{dlmf}, Euler's constant, $\gamma$ given by equation (5.2.3) in \cite{dlmf}, Catalan's constant given by equation (25.1.40) in \cite{dlmf} and the digamma function$\psi^{(0)}(z)$ given by equation (5.15.1) in \cite{dlmf}.
\begin{center}
\setlength{\tabcolsep}{16pt} 
\renewcommand{\arraystretch}{2.0} 
\begin{tabular}{ |c|c|c|c| } 
\hline
$f(u,x)$ & $\int_{{\mathbb{R}^{2}_{+}}}f(u,x)dudx$  \\
\hline
$\frac{e^{-x} E_{\frac{1}{4}}(-u)}{\sqrt{u} \sqrt[8]{x} \log
   \left(\frac{i u}{\sqrt[4]{x}}\right)}$ & $\frac{1}{2} i \left(\psi
   ^{(0)}\left(\frac{3}{8}\right)-\psi
   ^{(0)}\left(\frac{7}{8}\right)\right)$  \\ 
   $\frac{e^{-x} E_{\beta }(-u) x^{-(\beta  (m+n))} \left(u^n x^{\beta 
   m}-u^m x^{\beta  n}\right)}{u \log \left(u x^{-\beta }\right)}$ & $\log
   \left(\cot \left(\frac{\pi  m}{2}\right) \tan \left(\frac{\pi 
   n}{2}\right)\right)$  \\
   $e^{-x} u^{m-1} x^{-b m} E_b(-u) \log ^k\left(a u x^{-b}\right)$ & $(2 i
   \pi )^{k+1} \left(-e^{i \pi  m}\right) \Phi \left(e^{2 i m \pi
   },-k,-\frac{i (\log (a)+i \pi )}{2 \pi }\right)$  \\
$e^{-x} u^{m-1} x^{-b m} E_b(-u) \log ^k\left(-u x^{-b}\right)$ & $(2 i
   \pi )^{k+1} \left(-e^{-i \pi  m}\right) \text{Li}_{-k}\left(e^{2 i m \pi
   }\right)$  \\ 
$\frac{e^{-x} x^{-b/2} E_b(-u)}{\sqrt{u} \log ^2\left(-u
   x^{-b}\right)}$ & $-\frac{\pi }{24}$  \\
$\frac{e^{-x} x^{-b/4} E_b(-u)}{u^{3/4} \log ^2\left(-u
   x^{-b}\right)}$ & $-\frac{\sqrt[4]{-1} \left(\pi ^2-48 i C\right)}{96 \pi
   }$  \\
   $\frac{x^{-b/2} e^{-c x} E_b(-u) \log \left(\log \left(a u
   x^{-b}\right)\right)}{\sqrt{u}}$ & $\frac{1}{2} \pi  c^{\frac{b}{2}-1} \left(\log
   \left(\frac{16 \pi ^2 \Gamma \left(-\frac{i (\log (a)+b \log (c)+3 i \pi )}{4
   \pi }\right)^4}{\Gamma \left(-\frac{i (\log (a)+b \log (c)+i \pi )}{4 \pi
   }\right)^4}\right)+i \pi \right)$  \\
   $\frac{e^{-\frac{\pi  x}{4}} E_{\frac{4}{3}}(-u) \log \left(\log
   \left(-\frac{u}{x^{4/3}}\right)\right)}{\sqrt{u} x^{2/3}}$ & $\frac{\pi ^{2/3}}{\sqrt[3]{2}}
   \left(i \pi +\log \left(\frac{16 \pi ^2 \Gamma \left(1+\frac{i \log
   \left(\frac{4}{\pi }\right)}{3 \pi }\right)^4}{\Gamma
   \left(\frac{1}{2}+\frac{i \log \left(\frac{4}{\pi }\right)}{3 \pi
   }\right)^4}\right)\right)$  \\
    $\frac{e^{-x} x^{-b/2} E_b(-u) \log ^k\left(-u x^{-b}\right)}{\sqrt{u}}$ & $\left(1-2^{k+1}\right) (2 i \pi )^{k+1} \zeta (-k)$  \\
     $\frac{e^{-x} x^{-b/2} E_b(-u) \log \left(\log \left(-u
   x^{-b}\right)\right)}{\sqrt{u}}$ & $\pi  \left(\log (4)+\frac{i \pi
   }{2}\right)$  \\
      $\frac{e^{-x} x^{-b/2} E_b(-u) \log \left(\log \left(-u
   x^{-b}\right)\right)}{\sqrt{u} \log \left(-u x^{-b}\right)}$ & $\frac{1}{2} \log
   (2) \left(2 i \gamma +\pi -i \log \left(8 \pi ^2\right)\right)$  \\
       $\frac{e^{-x} x^{-b/2} E_b(-u) \log \left(\log \left(-u
   x^{-b}\right)\right)}{\sqrt{u} \log ^2\left(-u x^{-b}\right)}$ & $\frac{1}{24}
   \pi  \left(-12 \log (A)+\gamma -\frac{i \pi }{2}+\log (2)\right)$  \\
\hline
\end{tabular}
\end{center}
\subsection{Mittag-Leffler function  $E_b(-u)$}
The Mittag-Leffler function is an entire function and is defined in equation (10.46.3) in \cite{dlmf}. A plot of the Mittag-Leffler function over the range defined in this work.
%
\begin{figure}[H]
\includegraphics[scale=0.6]{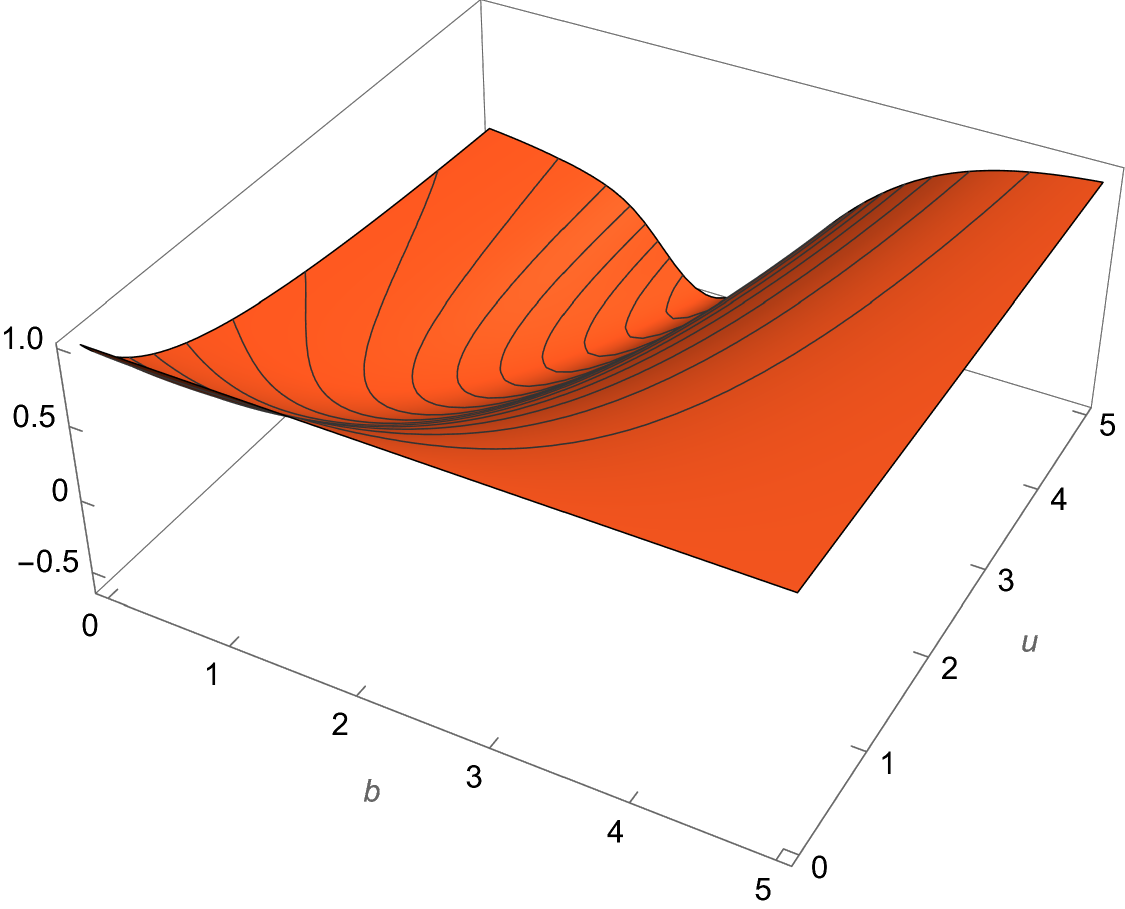}
\caption{ Plot of the Mittag-Leffler function $E_b(-u)$}
   \label{fig:fig2}
\end{figure}
\vspace{-6pt}
\section{Discussion}
In this paper, we have presented a method for deriving a new double integral transform involving the Mittag-Leffler function $E_b(-u)$ along with some interesting definite integrals as special cases using contour integration. The results presented were numerically verified for both real and imaginary and complex values of the parameters in the integrals using Mathematica by Wolfram. In a paper of this type, special attention has be taken to insure accuracy of the formulas.
\section{Funding} This research has no Governement funding.
\section{Conflict of Interests}
The author declares this work has no conflicts of interest.
\section{Competing interests}
The author declares there are no Competing interests with this work.
\section{Data Availability}
No external data was used in this work.

\begin{thebibliography}{999}
%
\bibitem{reyn4} Reynolds, R.; Stauffer, A.
{A Method for Evaluating Definite Integrals in Terms of Special Functions with Examples}.  \emph{Int. Math. Forum} \textbf{2020}, \emph{15}, 235--244, doi:10.12988/imf.2020.91272 
%
\bibitem{grad} Gradshteyn, I.S.; Ryzhik, I.M.
\emph{Tables of Integrals, Series and Products}, 6th ed.; Academic Press: Cambridge, MA, USA, 
 2000.
%
\bibitem{erd} Erd\'eyli, A.; Magnus, W.; Oberhettinger, F.; Tricomi, F.G.
\emph{Higher Transcendental Functions}; McGraw-Hill Book Company, Inc.: New York, NY, USA; Toronto, ON, Canada; London, UK, 1953; Volume I.
%
\bibitem{atlas} Oldham, K.B.; Myland, J.C.; Spanier, J.
\emph{An Atlas of Functions: With Equator, the Atlas Function Calculator}, 2nd ed.; Springer: New York, NY, USA, 2009.
%
\bibitem{dlmf} Olver, F.W.J.; Lozier, D.W.; Boisvert, R.F.; Clark, C.W. (Eds.)
 \emph{NIST Digital Library of Mathematical Functions}; U.S. Department of Commerce, National Institute of Standards and Technology: Washington, DC, USA; Cambridge University Press: Cambridge, UK, 2010; With 1 CD-ROM (Windows, Macintosh and UNIX). MR 2723248 (2012a:33001).
%
\bibitem{ober1}F. Oberhettinger (1990) 
\emph{Tables of Fourier Transforms and Fourier Transforms of Distributions}. Springer-Verlag Berlin Heidelberg.
%
\bibitem{lewin}L. Lewin (1981) 
\emph{Polylogarithms and Associated Functions}. North-Holland Publishing Co., New York.
%
\bibitem{mainardi}Francesco Mainardi, 
\emph{Fractional Calculus Theory and Applications}, 2018, MDPI AG, pp. 208
%
\bibitem{gelca} Gelca, Răzvan., Andreescu, Titu. 
\emph{Putnam and Beyond}. Germany: Springer International Publishing, 2017.
 \bibitem{Gorenflo}Rudolf Gorenflo, Anatoly A. Kilbas, Francesco Mainardi, and Sergei V. Rogosin. 2014. 
 \emph{Mittag-Leffler Functions, Related Topics and Applications}. Springer Publishing Company, Incorporated.
 %
 \bibitem{Gubara}Hussam M. Gubara, Alshaikh.A.Shokeralla.,
 \emph{Mellin Transform of Mittag-Leffler Density and its Relationship with Some Special Functions}, International Journal of Research in Engineering and Science (IJRES), ISSN (Online): 2320-9364, ISSN (Print): 2320-9356, www.ijres.org Volume 9 Issue 3, 2021, PP. 08-13
 %
 \bibitem{Haubold}H. J. Haubold, A. M. Mathai, R. K. Saxena, "Mittag-Leffler Functions and Their Applications", Journal of Applied Mathematics, vol. 2011, Article ID 298628, 51 pages, 2011. https://doi.org/10.1155/2011/298628
 %
 \bibitem{ozarslan}\"{O}zarslan, M.A., Yılmaz, B. The extended Mittag-Leffler function and its properties. J Inequal Appl 2014, 85 (2014). https://doi.org/10.1186/1029-242X-2014-85
 %
 \bibitem{reyn_aims}Robert Reynolds, Allan Stauffer. 
 \emph{Extended Prudnikov sum}, [J]. AIMS Mathematics, 2022, 7(10): 18576-18586. doi: 10.3934/math.20221021
 %
 \bibitem{gorenflo}Gorenflo, Rudolf, Anatoly A Kilbas, Francesco Mainardi, and Sergei V Rogosin. 2014. Mittag-Leffler Functions, Related Topics and Applications : Theory and Applications. Berlin, Heidelberg: Springer Berlin Heidelberg. https://doi.org/10.1007/978-3-662-43930-2.
 %
 \bibitem{fox}Fox, C. (\textbf{1928}). \emph{"The asymptotic expansion of integral functions defined by generalized hypergeometric series"}. Proc. London Math. Soc. 27 (1): 389-400. doi:10.1112/plms/s2-27.1.389
 %
\end{thebibliography}
\end{document}